\newtheorem{theorem}{Theorem}
\newtheorem{lemma}[theorem]{Lemma}
\newtheorem{proposition}[theorem]{Proposition} 
\newtheorem{corollary}[theorem]{Corollary}
\newtheorem*{theorem*}{Theorem}
\newtheorem*{corollary*}{Corollary}
\theoremstyle{definition}
\theoremstyle{remark}
\newcommand{\brfrt}{\hspace{0 pt}}
\DeclareMathOperator{\CAP}{CAP}
\DeclareMathOperator{\cf}{cf}
\newcommand{\iit}{\mathfrak{iit}}
\newcommand{\m}{\mathfrak}
\begin{document} 

\title
[Compactness with respect to a set of filters] 
{Topological spaces compact with respect to a set of filters}

\author{Paolo~Lipparini}

\address{
Dipartimento di Matematica\\II Universit\`a di Roma (Tor Vergata)\\Viale della Ricerca Scientifika, I-00133 ROME ITALY
         } 

\begin{abstract}
If $\mathcal P$ is a family of filters over some set $I$,
a topological space  $X$ is \emph{sequencewise $\mathcal P$-\brfrt compact}
if, for every $I$-indexed sequence of elements of $X$,
there is $F \in \mathcal P$  such that the sequence 
has an $F$-limit point. Countable compactness, sequential compactness, initial $\kappa$-compactness, $[ \lambda ,\mu]$-compactness, the Menger 
and Rothberger properties can all  be expressed in terms
of sequencewise $\mathcal P$-\brfrt compactness, for appropriate choices of $\mathcal P$. \\
We show that
sequencewise $\mathcal P$-\brfrt compactness is preserved under taking  products
if and only if there is a filter $F \in \mathcal P$  
such that sequencewise $\mathcal P$-\brfrt compactness is equivalent to   $F$-compactness.
 If this is the case, and there exists a 
sequencewise $\mathcal P$-\brfrt compact
$T_1$ topological space 
with more than one point, then 
$F$  is necessarily an ultrafilter.
The particular cases of sequential compactness 
and of $[ \lambda ,\mu]$-compactness are
 analyzed in detail.
\end{abstract}

\keywords{ Filter, ultrafilter convergence, sequencewise $\mathcal P$-\brfrt compactness, preservation under products, Comfort pre-order,  sequential compactness} 
\subjclass[2010]{54A20, 54B10, 54D20;  03E05}

\maketitle

\section{Introduction} \label{intro} 

Kombarov \cite{Ko} 
 generalized the notion
of ultrafilter compactness for topological spaces (Bernstein \cite{Be})
by taking into account a family $\mathcal P$  of ultrafilters,
rather than just a single ultrafilter.  
We extend Kombarov 
 notion to  families of filters
(not necessarily maximal). This
provides an essential strengthening:
for example, also sequential compactness and the Rothberger property become
particular cases.

 We assume no separation axiom, if not otherwise specified.
In order to avoid trivial exceptions, all topological spaces under consideration are assumed to be nonempty.

We now recall the main definitions. 
If $X$ is a topological space, 
$I$ is a set, $(x_i) _{i \in I} $ 
is an $I$-indexed sequence  of elements of  $X$,
and $F$  is a filter over $I$, 
 a point $x \in X$ is an
 \emph{$F$-limit point} 
(Choquet \cite{Ch}, Katetov \cite{Kat})
 of the sequence
$(x_i) _{i \in I} $
 if 
 $\{ i \in I \mid x_i \in U\} \in F$,
for every open neighborhood $U$ of $x$.

If $\mathcal P$ is a family of filters over $I$,
we say that $X$ is \emph{sequencewise $\mathcal P$-\brfrt compact}
 if, for every $I$-indexed sequence of elements of $X$,
there is $F \in \mathcal P$  such that the sequence 
has an $F$-limit point.
Kombarov \cite{Ko} introduced the above notion under the name 
\emph{$\mathcal P$-\brfrt 
compactness} in the particular case when 
$\mathcal P$ is a family of non principal ultrafilters over $ \omega$.
As far as we know, for $\mathcal P$  a family of ultrafilters over an arbitrary 
infinite 
cardinal $\alpha$, the notion  has been first considered
in Garc{\'{\i}}a-Ferreira \cite[Definition 1.2(1)]{GFfup} (under the name  
\emph{quasi $\mathcal P$-\brfrt 
compactness}).
We have chosen  the present terminology 
in the hope to avoid any possible ambiguity.
Apparently, in the above general form, the case in which $\mathcal P$ is a family of filters
has never been considered,
before we discussed it in \cite{cmuc},
under different terminology.
Notice that sequencewise $\mathcal P$-\brfrt compactness
is trivially closed hereditary and preserved under surjective continuous
images. 

We now present some examples.
When $\mathcal P= \{ F \} $ is a singleton,
we get the notion of \emph{$F$-compactness},
particularly studied in the case when $F$ is an ultrafilter
\cite{Be,GS,Sa}. 
As another example, a topological space is countably compact if and only if, in 
the present terminology, 
it is sequencewise $\mathcal P$-\brfrt compact, where $\mathcal P$ is the family of 
all uniform ultrafilters over $ I =\omega$ (Ginsburg and Saks \cite[p. 404]{GS}). 
More generally, for $\lambda$ regular,
a topological space satisfies $\CAP _ \lambda $
(every subset of cardinality $\lambda$ has a complete accumulation point)
 if and only if
it is sequencewise $\mathcal P$-\brfrt compact, for  the family $\mathcal P$ of 
all uniform ultrafilters over $ I = \lambda $ (Saks \cite[pp. 80--81]{Sa}). The assumption 
$\lambda$ regular is only for simplicity,  similar results hold for $\lambda$ singular,
and for pairs of cardinals, as well (\cite{Sa}; see also \cite[Sections 3 and 4]{tproc2}).
Caicedo \cite{Ca}  extended some of  the above results
and simplified many arguments; in particular, it 
follows easily from \cite[Section 3]{Ca} 
that a topological space is
 $[ \mu, \lambda ]$-\brfrt compact 
(Alexandroff and Urysohn \cite{AU}, Smirnov \cite{Sm}) 
if and only if it is sequencewise $\mathcal P$-\brfrt compact, for
the family $\mathcal P$ of all $( \mu, \lambda)$-regular ultrafilters
over $ [\lambda ] ^{< \mu} $ (the set of all subsets of $\lambda$
of cardinality $ < \mu$). 
In particular, the above examples include initial $\lambda$-compactness.
Also the Menger, the Rothberger and related properties can be given an equivalent formulation in terms of 
sequencewise $\mathcal P$-compactness. See  \cite{ufmeng}.

As another example, sequential compactness is 
equivalent to sequencewise $\mathcal P$-\brfrt compactness,
for the following choice of $\mathcal P$.
If $Z$ is an infinite subset of $ \omega$, let 
$F_Z = \{ W \subseteq \omega \mid |Z \setminus W| \text{ is finite}\} $,
that is, $F_Z$ is the filter on $ \omega$ generated by the Fr\'echet filter on $Z$. 
We now get sequential compactness  
by taking
$ I=\omega$ and  $\mathcal P = \{ F_Z \mid Z \text{ an infinite subset of $ \omega$}  \} $.

Related subjects have been treated in a great  generality in \cite{cmuc},
where, under different terminology, we showed that a large class
of both covering properties and accumulation properties 
 can be expressed by means of sequencewise $\mathcal P$-\brfrt compactness   (see \cite[Remark 5.4]{cmuc}). Besides the  examples mentioned above, in \cite{cmuc} we also considered various compactness
properties defined in terms of ordinal numbers. 

A brief summary of the paper  follows. In Section \ref{pres} we prove some
theorems on preservation of sequencewise $\mathcal P$-\brfrt compactness under
products. Section \ref{examplessec} discusses many examples,
and shows that some classical results can be obtained as a consequence of the theorems in Section \ref{pres}. 
In Section \ref{lmcpn} we deal with  $[ \mu, \lambda ]$-\brfrt compactness, while
the example of
sequential compactness is dealt with in detail in Section \ref{furth};
there we show 
 that all 
products of members of some family $\mathcal T$ are sequentially compact
if and only if in all members of $\mathcal T$ every sequence converges.  
Finally, the last section contains some additional comments, states some problems, and
introduces a pseudocompact-like generalization. Moreover, the connections with Comfort order on ultrafilters
are briefly discussed.

\section{Preservation under products} \label{pres}

We state our main result 
in a form relative to a class $\mathcal K$ of topological spaces,
since there are significant applications.
For a class $\mathcal K$ of topological spaces, we say that
sequencewise $\mathcal P$-\brfrt compactness  and sequencewise 
$\mathcal P'$-compactness  are \emph{equivalent in} $\mathcal K$
if, for every topological space $X \in \mathcal K$,   
$X$ is sequencewise $\mathcal P$-\brfrt compact if and only if $X$ is sequencewise $\mathcal P'$-compact (in the above definition, all members of $\mathcal P$ are filters over some set $I$, and all members of $\mathcal P'$ are filters over some set $I'$, but we are not necessarily assuming that $I=I'$).

In all product theorems below we allow repetitions, i.~e.,
we allow a space occur multiple times. 

\begin{theorem} \label{prodd}
Suppose that $\mathcal K$ is a class of topological spaces, and
 $\mathcal P$ is a nonempty family of filters over some set $I$.
Then the following conditions are equivalent.
 \begin{enumerate}  
 \item
Every product of sequencewise $\mathcal P$-\brfrt compact spaces which are members of $\mathcal K$
is sequencewise $\mathcal P$-\brfrt compact. 
 \item
Every product of $ |\mathcal P|$ many sequencewise $\mathcal P$-\brfrt compact spaces which are members of $\mathcal K$
is sequencewise $\mathcal P$-\brfrt compact. 
\item
Sequencewise $\mathcal P$-\brfrt compactness is equivalent in $\mathcal K$ to 
$F$-\brfrt compactness, for some filter $F \in \mathcal P$.
  \end{enumerate} 

If the class $\mathcal K$ is preserved under taking products, then the preceding
conditions are also equivalent to:
  \begin{enumerate}    
\item[(4)]
Sequencewise $\mathcal P$-\brfrt compactness is equivalent in $\mathcal K$ to 
$F$-\brfrt compactness, for some filter $F$ over some set $J$.
   \end{enumerate} 

Moreover, if either (3) or (4) above holds, and there exists in $\mathcal K$ some 
sequencewise $\mathcal P$-\brfrt compact
topological space with two disjoint nonempty 
closed sets, 
then any  $F$ as in (3) or (4) is an ultrafilter.
 \end{theorem}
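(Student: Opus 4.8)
The plan is to establish the cycle $(3) \Rightarrow (1) \Rightarrow (2) \Rightarrow (3)$, then handle $(4)$ under the extra hypothesis, and finally prove the ``Moreover'' clause. Two of these implications are immediate. For $(3) \Rightarrow (1)$ I would use the standard fact that $F$-compactness is productive: if $x = (x_j)_j$ is chosen so that each coordinate $x_j$ is an $F$-limit point of the corresponding projected sequence, then $x$ is an $F$-limit point of the original sequence, since a basic open neighborhood of $x$ constrains only finitely many coordinates and a finite intersection of members of $F$ lies in $F$. Thus, assuming $(3)$, every factor of a product as in $(1)$ is $F$-compact, hence so is the product; and because $F \in \mathcal P$, being $F$-compact gives at once that the product is sequencewise $\mathcal P$-compact, with no need for the product itself to lie in $\mathcal K$. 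The implication $(1) \Rightarrow (2)$ is trivial, as $(2)$ is the special case of $(1)$ using exactly $|\mathcal P|$ factors.

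The heart of the argument is $(2) \Rightarrow (3)$, which I expect to be the main obstacle. I would argue by contraposition. Since $F$-compactness always implies sequencewise $\mathcal P$-compactness whenever $F \in \mathcal P$, the failure of $(3)$ means that for every $F \in \mathcal P$ there is a space $X_F \in \mathcal K$ which is sequencewise $\mathcal P$-compact but not $F$-compact; fix for each such $F$ an $I$-indexed sequence $(x^F_i)_{i \in I}$ in $X_F$ having no $F$-limit point. Now index the factors by $\mathcal P$ itself and form $X = \prod_{F \in \mathcal P} X_F$, a product of $|\mathcal P|$ many sequencewise $\mathcal P$-compact members of $\mathcal K$. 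Consider the diagonal sequence $(p_i)_{i \in I}$ in $X$ defined by $p_i(F) = x^F_i$. By $(2)$ this sequence has a $G$-limit point $p$ for some $G \in \mathcal P$; projecting onto the coordinate indexed by $G$ and using continuity of the projection shows that $p(G)$ is a $G$-limit point of $(x^G_i)_{i \in I}$ in $X_G$, contradicting the choice of $X_G$. Hence $(3)$ holds.

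For the equivalence with $(4)$ when $\mathcal K$ is closed under products, $(3) \Rightarrow (4)$ is trivial, taking $J = I$ and the same $F$. For $(4) \Rightarrow (1)$ I would again invoke productivity of $F$-compactness: each factor of a product as in $(1)$ is sequencewise $\mathcal P$-compact, hence $F$-compact by $(4)$, so the product is $F$-compact; since $\mathcal K$ is closed under products the product lies in $\mathcal K$, and applying the equivalence $(4)$ in the reverse direction yields that it is sequencewise $\mathcal P$-compact. This closes the loop, so $(1)$--$(4)$ become equivalent. Finally, for the ``Moreover'' clause, let $F$ be a filter over $J$ witnessing $(3)$ or $(4)$, and let $X \in \mathcal K$ be sequencewise $\mathcal P$-compact with disjoint nonempty closed sets $C_0, C_1$; then $X$ is $F$-compact. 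Given an arbitrary $A \subseteq J$, choose $c_0 \in C_0$, $c_1 \in C_1$ and set $y_j = c_0$ for $j \in A$ and $y_j = c_1$ for $j \notin A$. An $F$-limit point $x$ of $(y_j)_{j \in J}$ cannot lie in both $C_0$ and $C_1$. If $x \notin C_1$, then $X \setminus C_1$ is an open neighborhood of $x$ containing $c_0$ but not $c_1$, so $A = \{ j \mid y_j \in X \setminus C_1 \} \in F$; symmetrically, $x \notin C_0$ forces $J \setminus A \in F$. As at least one case always applies, every $A \subseteq J$ satisfies $A \in F$ or $J \setminus A \in F$, so $F$ is an ultrafilter.
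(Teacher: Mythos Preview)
Your proof is correct and follows essentially the same approach as the paper: the same diagonal construction over $\mathcal P$ for $(2)\Rightarrow(3)$, productivity of $F$-compactness for $(3)\Rightarrow(1)$ and $(4)\Rightarrow(1)$, and the same two-closed-sets sequence for the ultrafilter clause. The only differences are presentational---you phrase $(2)\Rightarrow(3)$ as a contradiction rather than a contraposition, and your ``Moreover'' argument quantifies directly over all $A\subseteq J$ rather than starting from a witness to non-maximality of $F$---but these are the same arguments in slightly different dress.
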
  

\begin{proof}
(1) $\Rightarrow $  (2) and (3) $\Rightarrow $  (4) are trivial.

(2) $\Rightarrow $  (3)
Suppose that (3) fails, that is, for no $F \in \mathcal P$,  
sequencewise $\mathcal P$-\brfrt compactness is equivalent in $\mathcal K$ to 
$F$-compactness. We shall find
a family of $|\mathcal P|$ many sequencewise $\mathcal P$-\brfrt compact spaces in $\mathcal K$
whose product is not sequencewise $\mathcal P$-\brfrt compact, thus (2) fails. 
For every 
$F \in \mathcal P$,
$F$-compactness
trivially implies 
sequencewise $\mathcal P$-\brfrt compactness, hence if they are
not equivalent in $\mathcal K$, there is a topological space $X_F \in \mathcal K$
which is sequencewise $\mathcal P$-\brfrt compact but not $F$-compact.
The latter means that  there is a sequence  
$(x _{i,F} ) _{i \in I} $
of elements of $X_F$ 
which has no $F$-limit point in $X_F$.
For every $F \in \mathcal P$, choose some space and some sequence as above, and
let $X= \prod _{F \in \mathcal P} X_F$.
Consider in $X$ the sequence 
$(y _{i} ) _{i \in I} $ 
defined in such a way that, for each $i \in I$,
the projection of $y_i$ on $X_F$ is  
$x _{i,F}  $.
It is a well known fact (see, e.~g., \cite[Theorem 2.1]{Sa}),
 that, for every filter $F$, a sequence in a product has an $F$-limit point
if and only if all of its projections onto each factor have an $F$-limit point.
 Thus, given any $F \in \mathcal P$, the sequence $(y _{i} ) _{i \in I} $ 
cannot have  an $F$-limit point, as witnessed by its projection on $X_F$.
Hence $X$ is not sequencewise $\mathcal P$-compact.

(3) $\Rightarrow $  (1) It is a standard argument \cite{Be} to show that $F$-compactness
is preserved under products. Hence if  
sequencewise $\mathcal P$-\brfrt compactness is equivalent to 
$F$-compactness in $\mathcal K$, for some filter $F$, then 
any product of sequencewise $\mathcal P$-\brfrt compact
spaces which are members of $\mathcal K$ is $F$-compact.
Since $F \in \mathcal K$, 
then $F$-compactness implies  
sequencewise $\mathcal P$-\brfrt compactness, thus 
(1) holds.

The proof that (4) implies  (1), 
under the assumption that $\mathcal K$ is preserved under taking products,
is similar. As above, 
any product of sequencewise $\mathcal P$-\brfrt compact
spaces which are members of $\mathcal K$ is $F$-compact.
By the assumption, any such product is still a member of $\mathcal K$;
then (4) implies that the product is
sequencewise $\mathcal P$-\brfrt compact,
thus (1) holds.

To prove the last statement, suppose that the filter $F$ over, say, $J$ is not an 
ultrafilter. This implies that there are disjoint subsets  $J_1, J_2 \subseteq J$
such that $J_1 \cup  J_2 = J$ and neither $J_1 $ nor $ J_2 $ belongs to $F$. If
 $X $ is a
topological space with two disjoint nonempty closed sets $C_1$, $C_2$, then $X$ is
not $F$-compact, as shown by any sequence 
$(x_j) _{j \in J} $
such that $x_j \in C_1$ for  $j \in J_1$ and 
$x_j \in C_2$ for  $j \in J_2$.
 \end{proof}

In the particular case when all members of $\mathcal K$
are sequencewise $\mathcal P$-\brfrt compact, the statement
of Theorem \ref{prodd} becomes somewhat simpler. 

\begin{corollary} \label{proddd}
Suppose that $\mathcal K$ is a class of topological spaces, 
and  $\mathcal P$ is a nonempty family of filters over some set $I$.
Then the following conditions are equivalent.
 \begin{enumerate}  
 \item
Every product of  members of $\mathcal K$
is sequencewise $\mathcal P$-\brfrt compact. 
 \item
Every product of $ |\mathcal P|$ many  members of $\mathcal K$
is sequencewise $\mathcal P$-\brfrt compact. 
\item
There is some filter $F \in \mathcal P$ such that 
every member of $\mathcal K$ is  
$F$-\brfrt compact.
  \end{enumerate} 
\end{corollary}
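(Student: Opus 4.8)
The plan is to derive Corollary \ref{proddd} directly from Theorem \ref{prodd}, exploiting the standing hypothesis (stated just before the corollary) that every member of $\mathcal{K}$ is sequencewise $\mathcal{P}$-compact. Under this hypothesis, the qualifier ``sequencewise $\mathcal{P}$-compact spaces which are members of $\mathcal{K}$'', which appears in conditions (1) and (2) of the theorem, is interchangeable with simply ``members of $\mathcal{K}$''. Hence conditions (1) and (2) of the corollary are, word for word, conditions (1) and (2) of Theorem \ref{prodd}, and for these I would just invoke the theorem.

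The one point requiring attention is matching condition (3) of the corollary with condition (3) of the theorem. First I would argue theorem-(3) $\Rightarrow$ corollary-(3): if sequencewise $\mathcal{P}$-compactness is equivalent in $\mathcal{K}$ to $F$-compactness for some $F \in \mathcal{P}$, then, since every member of $\mathcal{K}$ is sequencewise $\mathcal{P}$-compact by hypothesis, this equivalence forces every member of $\mathcal{K}$ to be $F$-compact as well. Conversely, for corollary-(3) $\Rightarrow$ theorem-(3), I would suppose that every member of $\mathcal{K}$ is $F$-compact for some $F \in \mathcal{P}$. Because $F \in \mathcal{P}$, the implication $F$-compactness $\Rightarrow$ sequencewise $\mathcal{P}$-compactness is immediate from the definition, so on $\mathcal{K}$ both properties hold universally and therefore coincide, which is exactly theorem-(3).

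Combining these observations, conditions (1), (2), (3) of the corollary are respectively equivalent to conditions (1), (2), (3) of Theorem \ref{prodd}, which the theorem already shows to be mutually equivalent. I do not expect a genuine obstacle here, since all the real content is carried by Theorem \ref{prodd}; the only thing to keep track of is that it is precisely the membership $F \in \mathcal{P}$ that makes the trivial implication ``$F$-compact $\Rightarrow$ sequencewise $\mathcal{P}$-compact'' available, and this is what lets the backward direction for condition (3) go through.
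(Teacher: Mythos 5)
Your reduction to Theorem \ref{prodd} is the same strategy as the paper's own proof, but it rests on a misreading: there is no ``standing hypothesis'' attached to the corollary. The sentence preceding it (``In the particular case when all members of $\mathcal K$ are sequencewise $\mathcal P$-compact, the statement of Theorem \ref{prodd} becomes somewhat simpler'') is motivation, not an assumption; the corollary itself supposes only that $\mathcal K$ is a class of spaces and $\mathcal P$ a nonempty family of filters. As written, your argument proves the equivalence only under the extra hypothesis that every member of $\mathcal K$ is sequencewise $\mathcal P$-compact, which is a strictly weaker statement, and your ``word for word'' identification of corollary-(1),(2) with theorem-(1),(2) is not available at the outset.

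The gap is easy to close, and doing so is exactly the one-sentence content of the paper's proof: \emph{each} of the three conditions of the corollary already implies that every member of $\mathcal K$ is sequencewise $\mathcal P$-compact, so the hypothesis you wanted may be derived from whichever condition is currently assumed. For (1), a product with a single factor is a product of members of $\mathcal K$, so each member is itself sequencewise $\mathcal P$-compact. For (2), repetitions are allowed (as the paper stipulates just before Theorem \ref{prodd}), so $X^{|\mathcal P|}$ is sequencewise $\mathcal P$-compact for each $X \in \mathcal K$; since all spaces under consideration are nonempty, the projection onto a factor is a continuous surjection, and sequencewise $\mathcal P$-compactness is preserved under surjective continuous images (as noted in the introduction), whence $X$ is sequencewise $\mathcal P$-compact. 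For (3), this is precisely your own observation that $F \in \mathcal P$ makes $F$-compactness imply sequencewise $\mathcal P$-compactness. Once this preliminary step is in place, your matching of the three conditions with those of Theorem \ref{prodd} — including the two-way comparison for condition (3), which is carried out correctly — goes through and coincides with the paper's proof.
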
  

\begin{proof}
Each of the conditions in the corollary implies that 
every member of $\mathcal K$
is sequencewise $\mathcal P$-\brfrt compact. 
Under this assumption, each condition is
equivalent to the respective condition in Theorem \ref{prodd}. 
\end{proof}

\begin{corollary} \label{x}
Suppose that $X$ is a topological space, and
 $\mathcal P$ is a nonempty family of filters over some set $I$.
Then 
every power of $X$ is
 sequencewise $\mathcal P$-\brfrt compact 
 if and only if  $ X ^{|\mathcal P|}$ 
is sequencewise $\mathcal P$-\brfrt compact, 
if and only if 
$X$ is  
$F$-compact, for some  $F \in\mathcal P$.

If the above conditions hold, and $X$ has
two disjoint nonempty 
closed sets, 
then any  $F$ as above is an ultrafilter. 
\end{corollary}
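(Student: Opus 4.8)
The plan is to derive the entire statement as the special case $\mathcal K = \{X\}$ of Corollary \ref{proddd}, together with the final clause of Theorem \ref{prodd}. First I would observe that, since repetitions are allowed, a product of members of the singleton class $\{X\}$ is nothing but a power $X^\kappa$ of $X$, and a product of exactly $|\mathcal P|$ many members of $\{X\}$ is precisely $X^{|\mathcal P|}$. Under this identification, conditions (1), (2), (3) of the present corollary coincide verbatim with conditions (1), (2), (3) of Corollary \ref{proddd} for $\mathcal K = \{X\}$; in particular ``there is some $F \in \mathcal P$ such that every member of $\mathcal K$ is $F$-compact'' collapses to ``$X$ is $F$-compact for some $F \in \mathcal P$'', because $X$ is the only member of $\mathcal K$. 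Hence the three equivalences follow immediately.

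For the last assertion I would appeal to the final statement of Theorem \ref{prodd}, again with $\mathcal K = \{X\}$. That statement requires a sequencewise $\mathcal P$-compact member of $\mathcal K$ possessing two disjoint nonempty closed sets. The second requirement is exactly the hypothesis imposed on $X$. For the first, I would note that under the (now established) equivalent conditions, condition (3) holds, so $X$ is $F$-compact for some $F \in \mathcal P$; and $F$-compactness trivially implies sequencewise $\mathcal P$-compactness, as recalled in the proof of Theorem \ref{prodd}. Thus $X$ itself is the sequencewise $\mathcal P$-compact space with two disjoint nonempty closed sets needed to invoke the clause, and the conclusion that any such $F$ must be an ultrafilter follows.

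There is essentially no genuine obstacle here beyond bookkeeping: the whole result is a specialization of the preceding theorem and corollary. The only two points worth stating explicitly are the routine identification of the powers of $X$ with the products over $\{X\}$, and the observation that the equivalent conditions force $X$ to be sequencewise $\mathcal P$-compact, which is exactly what permits feeding $X$ into the last clause of Theorem \ref{prodd}.
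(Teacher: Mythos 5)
Your proposal is correct and follows exactly the paper's own route: the paper proves the corollary verbatim as ``immediate from Corollary \ref{proddd} by taking $\mathcal K = \{X\}$,'' with the ultrafilter clause obtained from the last statement of Theorem \ref{prodd}. Your extra verification that the equivalent conditions force $X$ itself to be sequencewise $\mathcal P$-compact (via $F$-compactness) is the small detail the paper leaves implicit, and you supply it correctly.
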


 \begin{proof}
Immediate from Corollary  \ref{proddd},  by taking $\mathcal K = \{ X \} $.
The last statement follows from the last  statement in Theorem \ref{prodd}.
 \end{proof}

\section{Examples} \label{examplessec} 

 Many 
results can be obtained as particular cases of 
Theorem \ref{prodd} and 
Corollary \ref{proddd}. 
For example, by the 
mentioned characterization of 
countable compactness, we get that,
for every class $\mathcal K$, every product of members of $\mathcal K$ 
is countably compact 
if and only if every product of
$2 ^{2 ^ \omega } $ members of $\mathcal K$ 
is countably compact,
if and only if there is an ultrafilter $F$ uniform over $ \omega$
such that every member of $\mathcal K$  is $F$-compact.
For powers of a single space this result is due
 to Ginsburg and Saks \cite[Theorem 2.6]{GS}.  

In a similar way, by applying the techniques of 
\cite[Sections 2 and 6]{Sa},
for every class $\mathcal K$
and every infinite cardinal $\lambda$,
we have that every product of members of $\mathcal K$ 
satisfies $\CAP _ \lambda $
if and only if every product of
$2 ^{2 ^ \lambda  } $ members of $\mathcal K$ satisfies $\CAP _ \lambda $,
if and only if there is an ultrafilter $F$ uniform over $\lambda$ 
such that every member of $\mathcal K$  is $F$-compact.

A completely analogous characterization of those classes $\mathcal K$ 
such that all products of members of $\mathcal K$ 
 are
$[ \mu, \lambda ]$-\brfrt compact
is obtained by using  $( \mu, \lambda)$-regular ultrafilters
over $ [\lambda ] ^{< \mu} $ \cite[Theorem 3.4]{Ca}.
We are going to give an improved version in the next section.

In particular, the above results furnish a characterization
of those classes $\mathcal K$ 
such that all products of members of $\mathcal K$ are
initially $\lambda$-compact. 
By \cite[Theorem 6.2]{Sa}, 
there exists some family $\mathbf P$
such that a topological space is 
 initially $\lambda$-compact
if and only if it is 
sequencewise $\mathcal P$-\brfrt compact,
for all $\mathcal P \in \mathbf P$.
Garc{\'{\i}}a-Ferreira \cite[Corollary 2.15]{GF1} 
improved this to a single family,
and this follows also from \cite{Ca} in a simpler way
(and with an improved bound $2 ^{2^ \lambda } $).
Moreover, in \cite[Theorem 2.17]{GF1}
it is proved that, for any given cardinal $\lambda$, 
if initial $\lambda$-compactness
is preserved under products, then there is some ultrafilter 
$D$ such that 
$D$-compactness is equivalent to 
initial $\lambda$-compactness.
Theorem \ref{prodd}, applied to the case when $\mathcal K$
is the class of all topological spaces, together with 
the characterization of initial $\lambda$-compactness 
in terms of sequencewise $\mathcal P$-\brfrt compactness, furnishes a simpler proof of
\cite[Theorem 2.17]{GF1}; actually, this proof  shows that 
initial $\lambda$-compactness
is preserved under products if and only if  there is some ultrafilter 
$D$ such that 
$D$-compactness is equivalent to 
initial $\lambda$-compactness
(and, if this is the case,  then $D$ can be chosen $( \omega, \lambda )$-regular over $|[ \lambda] ^{< \omega} ] |= \lambda $).
Moreover, all the above arguments apply 
to $[ \mu, \lambda ]$-\brfrt compactness, too; see the next section.
Other results about preservation of 
$[ \mu, \lambda ]$-\brfrt compactness
under products are given in \cite{prodlm},
where we establish a connection with strongly compact
cardinals.

In \cite{ufmeng} we provide characterizations
of the Menger property and of the Rothberger property 
in terms of sequencewise $\mathcal P$-\brfrt compactness.
Actually, we consider even more general properties, which
depend on three cardinals.
Though Theorem \ref{prodd} can be applied in
this situation, too, in \cite[Theorem 2.3]{ufmeng} we are able to 
obtain stronger bounds in a direct way.  
Anyway, the above characterizations are good examples
of the usefulness of allowing non maximal filters in $\mathcal P$;
indeed, the characterization of the Rothberger property
involves a family $\mathcal P$ consisting of filters none
of which is maximal \cite[Proposition 4.1, and the comments below]{ufmeng}.
 Moreover, these examples show how  significant the difference
is
between the case in which $\mathcal P$ contains
some ultrafilter, and the case in which $\mathcal P$ contains no ultrafilter
(compare the last statement in Theorem \ref{prodd}).
Indeed, there are $T_1$ spaces all whose powers are
Menger (they are exactly the compact spaces); on the contrary,
in \cite[Corollary 4.2]{ufmeng} we prove that if some product of
$T_1$ spaces is Rothberger, then all but at most a finite 
number of the factors are one-element. 
A somewhat similar situation  occurs in the case of sequential
compactness, as we are going to discuss in Section \ref{furth}.

\section{$[ \mu, \lambda ]$-compactness} \label{lmcpn} 

In this section we  provide a slight improvement of \cite[Theorem 3.4]{Ca}.
We need an auxiliary proposition, which may have independent interest.
 First, we prove a lemma which is essentially a restatement of well-known facts.
The \emph{initial interval topolgy} $\iit$ on a cardinal $\mu $ is
the topology whose opens sets are the intervals of the form 
$[0, \alpha )$ ($ \alpha \leq \mu$).

\begin{lemma} \label{reg} 
If $\mu $ is a regular cardinal, then a topological space $X$ 
fails to be $[\mu, \mu]$-compact if and only if there is a continuous
surjective function $f: X \to ( \mu, \iit)$. 
\end{lemma}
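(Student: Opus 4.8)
The plan is to prove both implications by translating between continuous maps into $(\mu,\iit)$ and increasing $\mu$-indexed open covers of $X$. The key observation is that the $\iit$-open sets of $\mu$ are exactly the initial segments $[0,\alpha)$, $\alpha \le \mu$, and that these form a chain under inclusion with $\bigcup_{\alpha<\mu}[0,\alpha) = \mu$. Thus a function $f\colon X \to \mu$ is continuous precisely when each sublevel set $U_\alpha := f^{-1}([0,\alpha)) = \{x : f(x) < \alpha\}$ is open, that is, when the $U_\alpha$ (for $\alpha \le \mu$) form an increasing chain of open sets with $U_0 = \emptyset$ and $U_\mu = X$; moreover surjectivity of $f$ amounts to this chain increasing strictly at every successor stage. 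This dictionary reduces the lemma to a statement about the existence of a suitable $\mu$-indexed open cover, which is essentially the defining feature of the failure of $[\mu,\mu]$-compactness.

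For the direction that assumes a continuous surjection $f\colon X \to (\mu,\iit)$, I would consider the open cover $\{U_\alpha : \alpha < \mu\}$, which has size $\le \mu$ and covers $X$. Given any subfamily indexed by a set $S \subseteq \mu$ with $|S| < \mu$, its union is contained in $U_\beta$, where $\beta = \sup S$; here regularity of $\mu$ guarantees $\beta < \mu$, and surjectivity of $f$ produces a point mapping to $\beta$, so that $U_\beta \ne X$ and the subfamily fails to cover. Hence this cover witnesses the failure of $[\mu,\mu]$-compactness.

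For the converse, start from an open cover $\{V_\xi : \xi < \mu\}$ of size $\le \mu$ admitting no subcover of size $<\mu$, and define $f(x) = \min\{\xi < \mu : x \in V_\xi\}$. Then $f^{-1}([0,\alpha)) = \bigcup_{\xi<\alpha} V_\xi$ is open, so $f$ is continuous; but $f$ need not be surjective, since the level function can skip values. The remedy is to pass to the range $R = f(X)$: because $\{V_\xi : \xi < \beta\}$ fails to cover for every $\beta < \mu$, the set $R$ is unbounded in $\mu$, and regularity then forces $R$ to have order type $\mu$. Composing $f$ with the inverse of the increasing enumeration $g\colon \mu \to R$ yields $h = g^{-1}\circ f$, which is surjective, and continuity survives because $h^{-1}([0,\alpha)) = f^{-1}([0,g(\alpha)))$ is again open. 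This $h$ is the desired continuous surjection onto $(\mu,\iit)$.

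The routine points are the topology computation (that the $\iit$-opens are the initial segments and that an order isomorphism respects them) and the verification that $h$ is continuous. I expect the one genuinely delicate point to be the handling of surjectivity in the converse: the naive level function can omit values, so the argument must first show that its range is unbounded and then invoke regularity of $\mu$ to collapse that range back to an order copy of $\mu$. Regularity enters both directions in the same guise — a set of fewer than $\mu$ ordinals below $\mu$ is bounded — and the write-up should make explicit that this is the only place the hypothesis is used.
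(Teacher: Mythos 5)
Your proof is correct, and at its core it is the mirror image of the paper's own argument. The paper starts from a quoted equivalence (for $\mu$ regular, \cite[Theorem 4.4]{tproc2}): $X$ fails to be $[\mu,\mu]$-compact if and only if there is a decreasing sequence $(C_\alpha)_{\alpha\in\mu}$ of nonempty closed sets with empty intersection; it then defines $f(x)=\sup\{\alpha\in\mu \mid x\in C_\alpha\}$ and checks continuity via preimages of the closed sets $[\beta,\mu)$. You instead work directly from the open-cover definition, defining $f(x)=\min\{\xi<\mu \mid x\in V_\xi\}$ and checking continuity via preimages of the open sets $[0,\alpha)$. The two constructions are dual: setting $C_\alpha = X\setminus\bigcup_{\xi<\alpha}V_\xi$ gives a decreasing closed family, nonempty for each $\alpha<\mu$ precisely because no subfamily of size $<\mu$ covers, and your $\min$-function is essentially the paper's $\sup$-function in complemented form. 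What your route buys is self-containedness --- you never need the cited auxiliary characterization --- at the modest cost of verifying by hand the direction the paper calls trivial: the paper dispatches it by noting that $(\mu,\iit)$ is not $[\mu,\mu]$-compact for $\mu$ regular and that $[\mu,\mu]$-compactness is preserved under continuous surjective images, and your explicit witness cover $\{U_\alpha\}_{\alpha<\mu}$ with no small subcover is exactly the proof of that quoted fact. You also correctly isolate the one genuinely delicate point, and handle it just as the paper does: the level function need not be surjective, and both arguments repair this by showing the range is cofinal in $\mu$ (the paper: because the $C_\alpha$ are nonempty with empty intersection; you: because no $\bigcup_{\xi<\beta}V_\xi$ covers), then using regularity to see the range has order type $\mu$ and collapsing it by the increasing enumeration --- the paper phrases this as ``the image of $f$ is homeomorphic to $(\mu,\iit)$,'' while your $h=g^{-1}\circ f$ makes the same homeomorphism explicit.
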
 

\begin{proof}
One implication is trivial, since 
$( \mu, \iit)$ is trivially not 
 $[\mu, \mu]$-compact, $\mu $ being a regular cardinal, and since  $[\mu, \mu]$-compactness
is preserved under continuous surjective images.

For the reverse implication, it is well-known that, for $\mu $ regular,
$X$ fails to be  $[\mu, \mu]$-compact if and only if 
there is a decreasing sequence $(C_ \alpha ) _{ \alpha \in \mu} $
of nonempty closed subsets of $X$ with empty intersection 
(see, e.~g., \cite[Theorem 4.4]{tproc2} for the proof in a more general context).
Define $f: X \to ( \mu, \iit)$ by 
$f(x) = \sup \{ \alpha \in \mu \mid x \in C_ \alpha \} $.
Notice that the range of $f$ is contained in $\mu $, since the sequence $(C_ \alpha ) _{ \alpha \in \mu} $
is decreasing with empty intersection.
Moreover, 
$f$ is continuous, since  
$f ^{-1} ([ \beta , \mu)) = C_ \beta   $,  
if $\beta \in \mu $ is a successor ordinal, 
and 
$f ^{-1} ([ \beta , \mu)) = \bigcap _{ \alpha < \beta }C_ \alpha  $, 
if $\beta \in \mu $ is limit;
hence the counterimage by $f$ of a closed set is closed.
Though $f$ need not be surjective, in general,
the image of $f$  is cofinal in $\mu $, since the $C_ \alpha  $'s
are nonempty but with empty intersection; thus the image of 
$f$ is homeomorphic to  $( \mu, \iit)$, and the lemma is proved. 
 \end{proof} 

The next proposition  does not give the best possible
 bounds, however it is enough for our purposes here; on the other hand,
 a proof of an optimal result would be considerably more involved.

\begin{proposition} \label{sing}
Suppose that  $\lambda$ is a singular cardinal and 
let $\kappa= 2 ^{ \lambda ^{< \lambda } } $.

If some product $\prod _{j \in J} X_j$
is $[ \lambda, \lambda ]$-compact, then
either

$|\{ j \in J \mid X_j \text{ is not $[\cf \lambda , \cf \lambda ]$\brfrt -compact}\}| < \kappa $,  or

$|\{ j \in J \mid X_j \text{ is not } 
[\lambda ^+, \lambda^+ ] \text{\brfrt -compact}\}| < \kappa $.

In particular, if $Y^ \kappa $ is $[ \lambda, \lambda ]$-compact, then
$Y$ is either $[\cf \lambda , \cf \lambda ]$-compact or $[\lambda ^+, \lambda^+ ]$\brfrt -compact.
 \end{proposition}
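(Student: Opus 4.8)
The plan is to prove the contrapositive of the main statement and then read off the ``in particular'' clause as a special case. Write $\rho=\cf\lambda$ and fix a continuous increasing sequence $\langle\lambda_\xi:\xi<\rho\rangle$ cofinal in $\lambda$. Suppose at least $\kappa$ of the factors fail to be $[\rho,\rho]$-compact and at least $\kappa$ of them fail to be $[\lambda^+,\lambda^+]$-compact. First I would separate indices: since both sets have size $\ge\kappa$, a short cardinality argument produces disjoint $A,B\subseteq J$ with $|A|=|B|=\kappa$ such that $X_j$ is not $[\rho,\rho]$-compact for $j\in A$ and not $[\lambda^+,\lambda^+]$-compact for $j\in B$. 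As $\rho$ and $\lambda^+$ are regular, Lemma \ref{reg} yields continuous surjections $X_j\to(\rho,\iit)$ for $j\in A$ and $X_j\to(\lambda^+,\iit)$ for $j\in B$. Taking products and composing with the projection $\prod_{j\in J}X_j\to\prod_{j\in A\cup B}X_j$ (all factors being nonempty), I obtain a continuous surjection of $\prod_{j\in J}X_j$ onto $W:=(\rho,\iit)^{\kappa}\times(\lambda^+,\iit)^{\kappa}$. Since $[\lambda,\lambda]$-compactness passes to continuous surjective images, it suffices to prove that $W$ is \emph{not} $[\lambda,\lambda]$-compact; the full statement then follows, and the concluding ``in particular'' assertion is just the instance $X_j=Y$ over an index set of size $\kappa$ (were $Y$ neither $[\rho,\rho]$- nor $[\lambda^+,\lambda^+]$-compact, all $\kappa$ factors of $Y^\kappa$ would fail both, contradicting the conclusion).

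For the remaining core I would pass to the ultrafilter description. By the characterization recalled in the introduction, $[\lambda,\lambda]$-compactness coincides with sequencewise $\mathcal P$-compactness for $\mathcal P$ the family of all $(\lambda,\lambda)$-regular ultrafilters over $I:=[\lambda]^{<\lambda}$, and a sequence in a product has an $F$-limit point iff each projection does. I would therefore test $W$ on the ``universal'' $I$-indexed sequence: index the $\kappa$ factors of type $(\rho,\iit)$ by all functions $I\to\rho$ and those of type $(\lambda^+,\iit)$ by all functions $I\to\lambda^+$ (there are $\rho^{|I|}=(\lambda^+)^{|I|}=2^{\lambda^{<\lambda}}=\kappa$ of each, which is exactly what fixes the bound), letting each coordinate be the corresponding function. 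In $(\rho,\iit)$ a sequence $(y_s)$ has an $F$-limit iff $\{s:y_s<\alpha\}\in F$ for some $\alpha<\rho$, and likewise in $(\lambda^+,\iit)$; hence the universal sequence has an $F$-limit point precisely when $F$ is neither $\rho$-decomposable nor $\lambda^+$-decomposable. Moreover any single sequence uses only some of these coordinate functions, so a regular $F$ witnessing a limit for the universal sequence witnesses one for every sequence. Consequently $W$ fails to be $[\lambda,\lambda]$-compact if and only if every $(\lambda,\lambda)$-regular ultrafilter over $I$ is $\cf\lambda$-decomposable or $\lambda^+$-decomposable.

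The heart of the matter, and the step I expect to be the main obstacle, is thus the set-theoretic dichotomy: every $(\lambda,\lambda)$-regular ultrafilter $F$ over $[\lambda]^{<\lambda}$ is $\cf\lambda$-decomposable or $\lambda^+$-decomposable. One case is immediate from regularity. Writing $E_\gamma=\{s:\gamma\in s\}\in F$ and $T(s)=\{\xi<\rho:\lambda_\xi\in s\}$, suppose $\{s:\sup T(s)<\rho\}\in F$; then $y(s)=\sup T(s)$ maps $I$ into $\rho$ and satisfies $\{s:y(s)\ge\xi\}\supseteq E_{\lambda_\xi}\in F$ for every $\xi<\rho$, so $y_*F$ is uniform and $F$ is $\cf\lambda$-decomposable. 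The difficulty is the complementary case, where $F$ concentrates on the $s$ meeting the ladder cofinally (equivalently, cofinal in $\lambda$): there every ordinal invariant of $s$ stays below $\lambda$, so producing a map onto a uniform ultrafilter on $\lambda^+$ requires the decomposability theory of regular ultrafilters at singular cardinals (Prikry-style results) rather than a bare diagonal argument. This is precisely where a sharper bound would demand a more delicate analysis, and where I would settle for the crude value $\kappa=2^{\lambda^{<\lambda}}$ in place of the optimal one.
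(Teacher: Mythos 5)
Your reduction coincides with the paper's own proof step for step: separating disjoint index sets $A,B$ of size $\kappa$, applying Lemma \ref{reg} at the regular cardinals $\cf\lambda$ and $\lambda^+$, obtaining a continuous surjection onto $(\cf\lambda,\iit)^\kappa\times(\lambda^+,\iit)^\kappa\cong(\cf\lambda\times\lambda^+)^\kappa$, and using preservation of $[\lambda,\lambda]$-compactness under continuous surjective images to reduce everything to showing that this single power is not $[\lambda,\lambda]$-compact (arguing by contrapositive rather than contradiction is immaterial). Your further unfolding via Caicedo's characterization is also sound: the ``universal sequence'' indexed by all functions $I\to\cf\lambda$ and $I\to\lambda^+$ correctly explains why the bound is $\kappa=2^{\lambda^{<\lambda}}$ (the counting $\rho^{|I|}=(\lambda^+)^{|I|}=2^{\lambda^{<\lambda}}$ checks out, as does the translation of $F$-limits in $(\mu,\iit)$ into $\mu$-indecomposability for regular $\mu$), and your bounded-trace case (pushing $F$ forward along $s\mapsto\sup\{\xi:\lambda_\xi\in s\}$ to get $\cf\lambda$-decomposability) is a correct argument.

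However, the proof as submitted has a genuine gap exactly at its mathematical heart: the claim that every $(\lambda,\lambda)$-regular ultrafilter concentrating on sets cofinal in $\lambda$ is $\lambda^+$-decomposable. You state this, correctly flag it as the main obstacle, and then appeal only to unspecified ``Prikry-style'' decomposability theory; no proof and no precise reference is given, and this is not a fact one can wave at --- the decomposability spectrum of $(\lambda,\lambda)$-regular ultrafilters at singular $\lambda$ is delicate, and obtaining a uniform image on $\lambda^+$ in the cofinal case is a nontrivial theorem. The paper itself does not prove this inline either, but it closes the step by citing a specific result, \cite[Proposition 5]{prodlm}, whose arguments applied to $X=\cf\lambda\times\lambda^+$ yield that $X^\kappa$ is not $[\lambda,\lambda]$-compact; in effect your universal-sequence analysis reconstructs the content of that cited proposition but stops short of its hard half. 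So the overall architecture matches the paper and everything you wrote is correct, but without either a proof of the $\lambda^+$-decomposability dichotomy or an explicit citation replacing it, the argument is incomplete and does not yet establish the proposition.
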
  

\begin{proof} 
Suppose by contradiction that 
$A, B \subseteq J$, 
$| A|= |B|= \kappa $, and that
$X_j $  is not $[\cf \lambda , \cf \lambda ]$-compact, 
for $j \in A$, and not
$[\lambda ^+, \lambda^+ ]$-compact,
for $j \in B$.
It is enough to show that
$\prod _{j \in A \cup B} X_j$
is not $[ \lambda, \lambda ]$-compact.
Moreover, since $\kappa$ is infinite, then, without loss of generality, we can suppose
$A \cap B = \emptyset $. 
By Lemma \ref{reg} applied $\kappa$ times in the case of the regular cardinals
$\cf \lambda $ and $\lambda^+$,  and by naturality of the product,
we have a surjective continuous function from
 $\prod _{j \in A \cup B} X_j$ to 
 $\cf \lambda ^ \kappa \times (\lambda^+) ^{ \kappa } 
\cong (\cf \lambda \times \lambda^+) ^{ \kappa }$,
 where both $\cf \lambda $ and $\lambda^+$ are endowed with 
the $\iit$ topology. 
The very same arguments of \cite[Proposition 5]{prodlm} 
applied to $X= \cf \lambda \times \lambda^+$ show 
that $X^ \kappa $ is not  
$[ \lambda, \lambda ]$-compact, hence
$\prod _{j \in A \cup B} X_j$
is not $[ \lambda, \lambda ]$-compact.
\end{proof}

The next theorem complements \cite[Theorem 3.4]{Ca},
which provides the better bound $ 2 ^{2 ^{ \lambda } } $, but only
in the case when
$\cf \lambda \geq \mu$ (in particular, when $\lambda$ is regular).

\begin{theorem} \label{singca}
Suppose that $\mu \leq \lambda $, $\lambda$ is singular, and $\mathcal T$ is a class of topological spaces. Then
all products of members of $\mathcal T$ are  $[ \mu, \lambda ]$-compact
if and only  all products of $ 2 ^{2 ^{ \lambda ^+} } $ members of $\mathcal T$ are  $[ \mu, \lambda ]$-compact.
 \end{theorem}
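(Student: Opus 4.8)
The plan is to prove only the nontrivial implication, since a product of $2^{2^{\lambda^+}}$ members is in particular a product of members. So assume every product of $2^{2^{\lambda^+}}$ members of $\mathcal T$ is $[\mu,\lambda]$-compact. Two reductions drive everything. First, taking $X\in\mathcal T$ and projecting $X^{2^{2^{\lambda^+}}}$ (which is $[\mu,\lambda]$-compact by hypothesis, and all spaces are nonempty) onto $X$ shows every member of $\mathcal T$ is itself $[\mu,\lambda]$-compact. Second, I would use the elementary fact that $X$ is $[\mu,\lambda]$-compact if and only if $X$ is $[\nu,\nu]$-compact for every cardinal $\nu$ with $\mu\le\nu\le\lambda$; the nontrivial direction is a finite-descent argument, applying $[\sigma,\sigma]$-compactness to a cover of size $\sigma$ to strictly shrink its cardinality until it drops below $\mu$. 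Thus it suffices to fix a cardinal $\nu\in[\mu,\lambda]$ and show that every product of members of $\mathcal T$ is $[\nu,\nu]$-compact.

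For $\nu$ regular this is tractable. Here $[\nu,\nu]$-compactness is sequencewise $\mathcal P$-compactness for $\mathcal P$ the family of uniform ultrafilters over $\nu$ (Saks; this is $\CAP _\nu$), and $|\mathcal P|=2^{2^\nu}\le 2^{2^{\lambda^+}}$. Since any product of $2^{2^\nu}$ members is a continuous image of a product of $2^{2^{\lambda^+}}$ members (pad with copies), the hypothesis makes every product of $2^{2^\nu}$ members $[\nu,\nu]$-compact, so Corollary \ref{proddd} (with $\mathcal K=\mathcal T$ and this $\mathcal P$) yields a single uniform ultrafilter $F_\nu$ over $\nu$ with every member of $\mathcal T$ being $F_\nu$-compact; as $F_\nu$-compactness is productive, every product is $[\nu,\nu]$-compact. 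If $\nu$ is singular with $\cf\nu\ge\mu$, then $\cf\nu$ is a regular cardinal in $[\mu,\lambda]$, so every product is $[\cf\nu,\cf\nu]$-compact, and the standard chain argument — a size-$\nu$ cover collapses along a cofinal $\cf\nu$-chain of partial unions, which $[\cf\nu,\cf\nu]$-compactness trivializes — upgrades this to $[\nu,\nu]$-compactness. This disposes of exactly Caicedo's case, with bound $2^{2^\lambda}$.

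The real work is $\nu$ singular with $\cf\nu<\mu$ (most importantly $\nu=\lambda$ when $\cf\lambda<\mu$), which is precisely what \cite{Ca} excludes: here $[\nu,\nu]$-compactness is weaker than $[\cf\nu,\cf\nu]$-compactness yet is not forced by compactness at the regular cardinals of $[\mu,\lambda]$, so it cannot be read off a small family of ultrafilters over $\nu$. I would invoke Proposition \ref{sing}, which is valid verbatim for any singular cardinal, applied to $\nu$ with $\kappa_\nu=2^{\nu^{<\nu}}\le 2^{2^\lambda}\le 2^{2^{\lambda^+}}$. Allowing repetitions, if some member of $\mathcal T$ failed $[\cf\nu,\cf\nu]$-compactness and some member failed $[\nu^+,\nu^+]$-compactness, then the product of $\kappa_\nu$ copies of the first and $\kappa_\nu$ copies of the second, padded with arbitrary members up to $2^{2^{\lambda^+}}$ factors, would by Proposition \ref{sing} fail to be $[\nu,\nu]$-compact, contradicting the hypothesis. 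This gives the dichotomy: either every member of $\mathcal T$ is $[\cf\nu,\cf\nu]$-compact, or every member is $[\nu^+,\nu^+]$-compact. In the second alternative $\nu^+$ is regular with $\cf\nu^+=\nu^+\ge\mu$, with $\nu^+\le\lambda$ when $\nu<\lambda$ and $\nu^+=\lambda^+$ when $\nu=\lambda$; it is this last branch, carrying the $2^{2^{\lambda^+}}$ uniform ultrafilters over $\lambda^+$, that dictates the stated bound and blocks Caicedo's $2^{2^\lambda}$.

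I expect the main obstacle to be closing this last case. The dichotomy constrains the factors, but the hypothesis only supplies product-compactness inside the band $[\mu,\lambda]$, and nothing immediate promotes ``all members $[\cf\nu,\cf\nu]$-compact'' (with $\cf\nu<\mu$) or ``all members $[\nu^+,\nu^+]$-compact'' (with $\nu^+>\lambda$ when $\nu=\lambda$) to $[\nu,\nu]$-compactness of the products — indeed $[\cf\nu,\cf\nu]$-compactness is not productive and $[\nu^+,\nu^+]$-compactness does not imply $[\nu,\nu]$-compactness even for single spaces. The way through is to run the product-preservation argument of Proposition \ref{sing} and \cite[Proposition 5]{prodlm} in the present, positive direction: the surjections onto $(\cf\nu,\iit)$ and $(\nu^+,\iit)$ furnished by Lemma \ref{reg}, together with the naturality of products, should be leveraged to show that under each alternative every product is in fact $[\nu,\nu]$-compact. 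Once every cardinal $\nu\in[\mu,\lambda]$ is handled, the first reduction assembles the conclusion, and the worst bound — $2^{2^{\lambda^+}}$, from the $\nu=\lambda$, $[\lambda^+,\lambda^+]$ branch — gives the estimate.
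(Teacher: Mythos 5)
Your reductions are sound as far as they go: the finite-descent equivalence of $[\mu,\lambda]$-compactness with $[\nu,\nu]$-compactness for all $\nu \in [\mu,\lambda]$, the regular case via Corollary \ref{proddd} (in effect a re-derivation of Caicedo's theorem, which the paper simply cites as a black box), and the case of singular $\nu$ with $\cf\nu \ge \mu$ are all correct. But the decisive case --- $\nu$ singular with $\cf\nu < \mu$, in particular $\nu = \lambda$ --- is left open, and you say so yourself. Two things go wrong there. First, you extract the dichotomy from Proposition \ref{sing} only at the level of \emph{members} of $\mathcal T$, and, as you correctly observe, neither alternative then transfers to products ($[\cf\nu,\cf\nu]$-compactness is not productive). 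The paper avoids this by running the dichotomy at the level of \emph{products} of $2^{2^{\lambda^+}}$ members: if $Y_1$, $Y_2$ are such products failing respectively $[\cf\lambda,\cf\lambda]$- and $[\lambda^+,\lambda^+]$-compactness, then $(Y_1\times Y_2)^{\kappa}$ with $\kappa = 2^{\lambda^{<\lambda}} \le 2^{2^{\lambda^+}}$ is again a product of $2^{2^{\lambda^+}}$ members, hence $[\lambda,\lambda]$-compact by hypothesis, and the ``in particular'' clause of Proposition \ref{sing} gives a contradiction. With the dichotomy at product level, the first branch closes by exactly your regular-case machinery applied at the regular cardinal $\cf\lambda$ (the bound $2^{2^{\cf\lambda}} \le 2^{2^{\lambda^+}}$ yields a single uniform ultrafilter over $\cf\lambda$, whence all products are $[\cf\lambda,\cf\lambda]$-compact, hence $[\lambda,\lambda]$-compact by the easy chain argument); note this nowhere needs $\cf\lambda \ge \mu$.

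Second, and more seriously, the $[\nu^+,\nu^+]$ branch cannot be closed by ``running Lemma \ref{reg} in the positive direction'': that lemma only characterizes \emph{failure} of $[\mu,\mu]$-compactness by surjections onto $(\mu,\iit)$, and no preservation statement follows from it. What the paper uses here is an external, genuinely nontrivial ingredient: Caicedo's \cite[Corollary 1.8(ii)]{Ca}, to the effect that if all products of members of $\mathcal T$ are $[\lambda^+,\lambda^+]$-compact then all such products are $[\lambda,\lambda]$-compact (at bottom this rests on a regularity property of uniform ultrafilters over the successor of a singular cardinal). Your own correct remark that $[\nu^+,\nu^+]$-compactness does not imply $[\nu,\nu]$-compactness for single spaces shows that productivity must enter in an essential way, so some such theorem is unavoidable; the missing idea is a citation, not a rearrangement of the tools you already have. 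A smaller structural remark: your decomposition into all $\nu \in [\mu,\lambda]$ forces you to treat every singular $\nu$ of small cofinality separately, whereas the paper needs only the single problematic cardinal $\nu = \lambda$, since for limit $\lambda$, $[\mu,\lambda]$-compactness is equivalent to $[\lambda,\lambda]$-compactness together with $[\mu,\lambda']$-compactness for all regular $\lambda' < \lambda$, and the latter are covered by Caicedo's theorem directly.
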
 

\begin{proof}
One implication is trivial. 
We shall first prove the  converse in the particular case 
$\mu = \lambda $.
Suppose that all products of $ 2 ^{2 ^{ \lambda ^+} } $ members of $\mathcal T$ are  $[ \lambda , \lambda ]$-compact.
From 
Proposition \ref{sing}
we get that either 
(1) all  products of $ 2 ^{2 ^{ \lambda ^+} } $ members of $\mathcal T$
are $[ cf \lambda , \cf \lambda ]$-compact,
or (2) all  products of $ 2 ^{2 ^{ \lambda ^+} } $ members of $\mathcal T$
are  $[ \lambda^+ , \lambda^+ ]$-compact.
Indeed, if  $Y_1 $, $  Y_2$ were, respectively,  
not $[ cf \lambda , \cf \lambda ]$-compact
and not 
$[ \lambda^+ , \lambda^+ ]$-compact 
products of $ 2 ^{2 ^{ \lambda ^+} } $ members of $\mathcal T$,
then, since 
$ \kappa = 2 ^{ \lambda ^{< \lambda } } \leq 2 ^{2 ^{ \lambda ^+} } $,
 also 
$(Y_1 \times Y_2) ^ \kappa $
could be written as 
a product of $ 2 ^{2 ^{ \lambda ^+} } $ members of $\mathcal T$,
henceforth $(Y_1 \times Y_2) ^ \kappa $  would be $[ \lambda , \lambda ]$-compact, but then 
Proposition \ref{sing} gives a contradiction.
In eventuality (1) above, we have  
that all products of members of $\mathcal T$ are 
 $[ \cf\lambda , \cf\lambda ]$-compact, by Caicedo's theorem,
since $ 2 ^{2 ^{ \cf\lambda } } \leq  2 ^{2 ^{ \lambda ^+} } $; then
 the conclusion follows from the easy fact that
 $[ \cf\lambda , \cf\lambda ]$-compactness implies 
 $[ \lambda , \lambda ]$-compactness.
In eventuality (2), in a similar way, we have that
all products of members of $\mathcal T$ are 
$[ \lambda^+ , \lambda^+ ]$-compact.
This implies that 
all products of members of $\mathcal T$ are 
$[ \lambda , \lambda ]$-compact, by another theorem by
Caicedo \cite[Corollary 1.8(ii)]{Ca}.
We have proved the theorem in the particular case $\mu = \lambda $. 

To prove the general case, notice that, since 
$\mu  \leq \lambda $, then
$[ \mu, \lambda ]$-compactness
implies $[ \lambda , \lambda ]$-compactness, hence we get from the above special case that
 all products of members of $\mathcal T$ are 
$[ \lambda , \lambda ]$-compact.
Moreover, for every regular $\lambda' < \lambda $,
all such products are $[ \mu, \lambda' ]$-compact, hence, 
since $ 2 ^{2 ^{ \lambda' } } \leq  2 ^{2 ^{ \lambda ^+} } $, and again using Caicedo's theorem, we get that
all products of members of $\mathcal T$ are  $[ \mu, \lambda' ]$-compact.
Since $\mu < \lambda $ and $\lambda$ is limit, then $[ \mu, \lambda ]$-compactness is equivalent to the conjunction
of $[ \lambda , \lambda ]$-compactness and of $[ \mu, \lambda' ]$-compactness
for every regular $\lambda' < \lambda $, thus 
all products of members of $\mathcal T$ are  $[ \mu, \lambda ]$-compact. 
 \end{proof}

\section{Sequential compactness} \label{furth}

Recall that a space $X$ is called \emph{ultraconnected} if
no pair of nonempty closed sets of $X$  is disjoint. Equivalently,
a space is ultraconnected if and only if 
$ \overline{\{ x_1 \}} \cap \dots \cap \overline{\{ x_n \}} \not= \emptyset  $, 
for every $n>0$ and every $n$-tuple  $x_1$, \dots  $x_n$
of elements of $X$, where overline denotes closure.

We need the following easy lemma, for which we know no reference.

\begin{lemma} \label{ultrac2}
For every topological space $X$, the following conditions are equivalent. \begin{enumerate}  
 \item Every sequence in $X$ converges.
\item  $X$ is   ultraconnected  and
 sequentially compact.
  \end{enumerate} 
 \end{lemma}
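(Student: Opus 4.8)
The plan is to prove the two implications separately, the forward one being essentially immediate and the reverse one carrying the real content.

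For (1) $\Rightarrow$ (2): sequential compactness comes for free, since a sequence that converges in particular has a convergent subsequence, namely itself. To obtain ultraconnectedness I would argue by contraposition against the ``no disjoint pair of nonempty closed sets'' form of the definition. If $X$ had disjoint nonempty closed sets $C_1, C_2$, I would choose $a \in C_1$ and $b \in C_2$ and apply hypothesis (1) to the alternating sequence $a,b,a,b,\dots$, obtaining a limit $x$. Every open neighbourhood $U$ of $x$ must then contain both $a$ and $b$, since otherwise one of these two values would lie outside $U$ for infinitely many indices, contradicting convergence. Hence $x \in \overline{\{a\}} \cap \overline{\{b\}} \subseteq C_1 \cap C_2 = \emptyset$, a contradiction, so no such pair exists and $X$ is ultraconnected.

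For (2) $\Rightarrow$ (1): fix an arbitrary sequence $(x_n)_{n \in \omega}$ and aim to produce a point to which it converges. The key observation is that any element $g$ of $\bigcap_{n \in \omega} \overline{\{x_n\}}$ works: if $g$ lies in every $\overline{\{x_n\}}$, then every open neighbourhood of $g$ contains every $x_n$, so the whole sequence sits inside each neighbourhood of $g$ and thus $(x_n)$ converges to $g$. The entire problem therefore reduces to showing this countable intersection is nonempty. Ultraconnectedness supplies exactly the finite intersection property for $\{\overline{\{x_n\}} : n \in \omega\}$, so the decreasing closed sets $D_n = \overline{\{x_0\}} \cap \dots \cap \overline{\{x_n\}}$ are all nonempty. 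To pass from finite to countable intersections I would invoke the standard fact that sequential compactness implies countable compactness, i.e.\ that a decreasing sequence of nonempty closed sets has nonempty intersection; for completeness I would include the short argument (choose $y_n \in D_n$, extract a convergent subsequence $y_{n_k} \to g$, and note that for each fixed $m$ the tail of this subsequence lies in the closed set $D_m$, whence $g \in D_m$). This gives $g \in \bigcap_n D_n = \bigcap_n \overline{\{x_n\}}$, finishing the reverse implication.

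The main obstacle is precisely this reverse step of upgrading the finite intersection property of ultraconnectedness to a nonempty countable intersection; this is the only place where sequential compactness is genuinely used. The point requiring care is that the argument ``a sequence eventually inside a closed set has its limit in that set'' must be valid with no separation axioms assumed, which it is, since the complement of the closed set is an open neighbourhood of the limit that the tail would have to avoid. Once $g$ is found, verifying convergence of the whole sequence is routine, and I would also confirm that the alternating sequence used in the forward direction invokes only convergence of that one explicit sequence, so that no hidden separation hypothesis slips in.
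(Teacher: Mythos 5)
Your proof is correct and takes essentially the same route as the paper's: the forward direction uses a sequence alternating between the two disjoint closed sets, and the reverse direction picks $y_n \in \overline{\{x_0\}} \cap \dots \cap \overline{\{x_n\}}$ and extracts a convergent subsequence via sequential compactness. Your only variation is to make explicit, through the intermediate point $g \in \bigcap_{n} \overline{\{x_n\}}$ and the (separation-axiom-free) closed-set limit argument, the final step the paper dismisses with ``it is easy to see''; this is a faithful filling-in of the same argument rather than a different method.
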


\begin{proof} 
(1) $\Rightarrow $  (2) is trivial, since
if every sequence in $X$ converges,
then  $X$ is  surely
sequentially compact.
Moreover, if $X$ is not ultraconnected, say 
$C_1, C_2 \subseteq X$ are closed and disjoint, 
then it is enough to consider  any sequence  
which takes infinitely many values in $C_1$ 
and infinitely many values in $C_2$, in order to get a nonconverging sequence.

In order to prove (2) $\Rightarrow $  (1),
 suppose that $X$ is  
 sequentially compact and ultraconnected, and let 
$(x_n) _{n \in \omega } $ be a sequence of elements of $X$.
By ultraconnectedness, $ \overline{\{ x_0 \}} \cap \dots \cap \overline{\{ x_n \}} \not= \emptyset  $, for every $ n \in \omega$.
For each $ n \in \omega$,
pick some   
$y_n \in  \overline{\{ x_0 \}} \cap \dots \cap \overline{\{ x_n \}}$.
By sequential compactness, 
some subsequence of 
$(y_n) _{n \in \omega } $ converges to some $y \in X$.
Then it is easy to see
that also 
$(x_n) _{n \in \omega } $ converges to $y $.
\end{proof}

Recall that  the \emph{splitting number} $\m s$ is
the least cardinality of a family $\mathcal S \subseteq [\omega]^ \omega $
such that, for every  $A \in   [\omega]^ \omega $,
there exists $S \in \mathcal S $ such that both
$A \cap S$ and $A \setminus S$ are infinite.  
See, e.~g., \cite{vd} for further details.

\begin{lemma} \label{ultra}
A product of $\geq \m s$ spaces which are not ultraconnected is not
sequentially compact.
 \end{lemma}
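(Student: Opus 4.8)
The plan is to exploit the defining property of the splitting number $\m s$ to produce a single sequence in the product admitting no convergent subsequence. First I would reduce to exactly $\m s$ factors: fix a splitting family $\mathcal S = \{ S_j \mid j \in J \}$ with $|J| = \m s$, and use it to index $\m s$ of the given factors, say $(X_j) _{j \in J}$; the remaining factors, if any, play no role and can be frozen at a single point, using that all spaces are nonempty. Since each $X_j$ fails to be ultraconnected, I may choose disjoint nonempty closed sets $C_j^0, C_j^1 \subseteq X_j$ and pick points $a_j \in C_j^0$ and $b_j \in C_j^1$.

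Next I would build the sequence $(x^n) _{n \in \omega}$ by letting its $j$-th coordinate oscillate between the two closed sets according to $S_j$: set $x^n_j = a_j$ when $n \in S_j$ and $x^n_j = b_j$ when $n \notin S_j$ (and constant on the unused coordinates). Thus the membership pattern of $n$ in $S_j$ records precisely whether coordinate $j$ sits in $C_j^0$ or in $C_j^1$ at stage $n$, and the splitting property is designed to defeat every candidate subsequence simultaneously.

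To verify that no subsequence converges, I argue by contradiction. Suppose $(x^n) _{n \in A}$ converges to some $x$ for an infinite $A \subseteq \omega$. By the splitting property applied to $A$, there is $j \in J$ with both $A \cap S_j$ and $A \setminus S_j$ infinite. Projecting onto the $j$-th factor (projections are continuous), $(x^n_j) _{n \in A}$ converges to $x_j$; yet it has infinitely many terms equal to $a_j \in C_j^0$ and infinitely many equal to $b_j \in C_j^1$. Since a sequence converging to $x_j$ with infinitely many terms in a closed set must have its limit in that set, I obtain $x_j \in C_j^0 \cap C_j^1 = \emptyset$, a contradiction. Hence the product is not sequentially compact.

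The only genuine subtlety, and the point I would treat with care, is that no separation axiom is assumed, so limits need not be unique and I cannot argue merely that "the limit equals the constant value $a_j$, hence differs from $b_j$." Instead the argument must route through closedness: if $x_j$ had an open neighborhood $U$ disjoint from $C_j^0$, then all but finitely many terms along $A$ would lie in $U$, hence outside $C_j^0$, contradicting that infinitely many of them equal $a_j \in C_j^0$; therefore $x_j \in C_j^0$, and symmetrically $x_j \in C_j^1$. This closed-set formulation is exactly why ultraconnectedness (phrased via disjoint nonempty closed sets) is the correct hypothesis, and it is what renders the two eventualities genuinely incompatible.
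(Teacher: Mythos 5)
Your proof is correct, but it takes a different route from the paper. The paper argues by reduction: letting $C_j, C'_j$ be the disjoint nonempty closed sets in $X_j$, it observes that $Y = \prod_j (C_j \cup C'_j)$ is a closed subspace of the product which maps continuously onto the discrete power $\mathbf 2^{\lambda}$ with $\lambda \geq \m s$, and then simply cites van Douwen's theorem that $\mathbf 2 ^{\m s}$ is not sequentially compact, using that sequential compactness passes to closed subspaces and continuous surjective images. You instead inline the combinatorial content of van Douwen's theorem: you fix a splitting family of size $\m s$, build one sequence in the product whose $j$-th coordinate oscillates between the two closed sets according to $S_j$, and kill every candidate convergent subsequence by splitting its index set. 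The two proofs share the same core (a splitting family is exactly what makes $\mathbf 2^{\m s}$ fail sequential compactness), but they distribute the work differently: the paper's version is shorter and modular, and sidesteps separation issues entirely because the final contradiction lives in the Hausdorff space $\mathbf 2^{\m s}$, where a two-valued sequence split by some $S_j$ cannot converge; your version is self-contained at the cost of re-proving the cited result, and since you work directly in the possibly non-Hausdorff product you genuinely need the closed-set argument you flag at the end (a limit of a sequence with infinitely many terms in a closed set lies in that set), which you carry out correctly --- this is precisely where the hypothesis of disjoint nonempty \emph{closed} sets, i.e., failure of ultraconnectedness, is used. Both arguments are complete; yours would also make the role of $\m s$ visible to a reader without access to \cite{vd}, while the paper's makes the structural reduction (factoring through $\mathbf 2^{\m s}$) explicit, which is the pattern reused elsewhere in the paper.
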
 

\begin{proof} 
Let $X= \prod _{j \in J} X_j $ 
be a product of $\geq \m s$ spaces which are not ultraconnected.
Thus each $X_j$  has two disjoint closed nonempty subsets 
$C _j $  and $C'_j$. Letting $Y_j= C_j \cup C'_j$, for $j \in J$,
 we have that each $Y_j$ is a closed subsets of $X_j$,
hence   $Y= \prod _{j \in J} Y_j $   is a closed subset of 
 $X= \prod _{j \in J} X_j $.
For each $j \in J$, we can define a continuous surjective function from
$Y_j$ to the two elements discrete space 
$ \mathbf 2 = \{ d_1, d_2\} $,
by letting 
$f_j(y) = d_1$, for $y \in C_j$, and
$f_j(y) = d_2$, for $y \in C'_j$
(here we are using the assumption that
$C_j$  and $C'_j$ are disjoint and nonempty).
Naturally, we have a continuous surjective 
function from $Y= \prod _{j \in J} Y_j $ to 
$\mathbf 2 ^ \lambda $, for  $ \lambda =|J| \geq {\m s}$; 
however,
$\mathbf 2 ^{\m s}$ is well known not to be
sequentially compact
\cite[Theorem 6.1]{vd}, 
hence neither $Y$ nor
$X$ are sequentially compact.
\end{proof}

\begin{corollary} \label{seqcpseq} 
For every family $\mathcal T$ of topological spaces, the following conditions are equivalent. 
\begin{enumerate}
   \item 
All products of members of $\mathcal T$   are sequentially compact.
\item
For every $X \in \mathcal T$, $X^{\m s}$ is sequentially compact. 
\item
Every $X \in \mathcal T$ has the property that in $X$ every sequence converges.
\item
In all products of members of $\mathcal T$ every sequence converges.
  \end{enumerate}
\end{corollary}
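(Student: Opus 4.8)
The plan is to prove the cyclic chain of implications $(1) \Rightarrow (4) \Rightarrow (2) \Rightarrow (3) \Rightarrow (1)$, assembling the three lemmas just proved and the known non-sequential-compactness of $\mathbf{2}^{\m s}$. First I would dispatch the easy directions. The implication $(4) \Rightarrow (1)$ is immediate, since a product of members of $\mathcal{T}$ is a special case of a product indexed over all of $\mathcal{T}$ with multiplicity, and if every sequence in such a product converges then in particular the product is sequentially compact. The implication $(1) \Rightarrow (2)$ is trivial, as $X^{\m s}$ is one particular product of members of $\mathcal{T}$ (namely $\m s$ copies of $X$). The implication $(3) \Rightarrow (1)$ I expect to reduce to a projection argument: if every sequence converges in each factor, then given any sequence in $\prod_{j} X_j$, its projection to each $X_j$ converges, and one checks that a sequence in a product converges precisely when all its coordinate projections converge; hence the whole sequence converges, giving even condition $(4)$ and a fortiori $(1)$. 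So in fact the real content lives in getting from $(2)$ to $(3)$.

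The heart of the argument is $(2) \Rightarrow (3)$, where the two lemmas above do the work. Fix $X \in \mathcal{T}$ and assume $X^{\m s}$ is sequentially compact. First I would observe that this forces $X$ itself to be sequentially compact (a continuous retract, or just the diagonal projection, transfers sequential compactness down to $X$). The crucial second step is to show $X$ is ultraconnected. Here I invoke Lemma~\ref{ultra}: if $X$ were \emph{not} ultraconnected, then $X^{\m s}$, being a product of $\geq \m s$ spaces that are not ultraconnected, would fail to be sequentially compact, contradicting $(2)$. Thus $X$ is both sequentially compact and ultraconnected, and now Lemma~\ref{ultrac2} delivers exactly the conclusion that every sequence in $X$ converges, which is $(3)$.

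The main obstacle I anticipate is not in the logical skeleton but in making sure Lemma~\ref{ultra} applies cleanly: it requires $\geq \m s$ factors that are all non-ultraconnected, and in forming $X^{\m s}$ we do indeed have exactly $\m s$ identical factors, each equal to $X$, so the hypothesis is met the moment we assume $X$ is non-ultraconnected. One subtlety worth stating explicitly is why $(3)$ upgrades all the way to $(4)$ rather than merely $(1)$ — this is the projection characterization of convergence in products, and it is what makes the four conditions genuinely equivalent rather than leaving $(4)$ strictly stronger. I would therefore close the loop by noting that the proof of $(3) \Rightarrow (1)$ in fact establishes $(3) \Rightarrow (4)$, and since $(4) \Rightarrow (1) \Rightarrow (2) \Rightarrow (3)$ have been shown, all four conditions coincide. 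The only genuinely external ingredient is the cited fact that $\mathbf{2}^{\m s}$ is not sequentially compact, which is already packaged inside Lemma~\ref{ultra} and so need not be reproved here.
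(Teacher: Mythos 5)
Your proposal is correct and takes essentially the same route as the paper: the substantive implication $(2) \Rightarrow (3)$ is handled exactly as in the text, by applying Lemma \ref{ultra} in contrapositive form to the $\m s$ identical non-ultraconnected factors of $X^{\m s}$ and then invoking Lemma \ref{ultrac2}, while your coordinate-projection argument for $(3) \Rightarrow (4)$ is just an unpacking of the paper's statement that every member of $\mathcal T$ is $F$-compact for the Fr\'echet filter $F$ over $\omega$ and that $F$-compactness is preserved under products. The only blemish is that your opening sentence misstates the cycle as $(1) \Rightarrow (4) \Rightarrow (2) \Rightarrow (3) \Rightarrow (1)$, whereas the body of your argument correctly establishes $(4) \Rightarrow (1) \Rightarrow (2) \Rightarrow (3) \Rightarrow (4)$, which suffices.
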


 \begin{proof}
(1) $\Rightarrow $  (2) and (4) $\Rightarrow $  (1) are  trivial.

Had we replaced 
${\m s}$  with 
$2 ^ \omega $, in (2), 
the implication (2) $\Rightarrow $  (3) would follow immediately  by Corollary \ref{x}
using the 
  characterization of sequential compactness presented at the beginning, since
$| [\omega] ^ \omega|=2^ \omega $ and, for every $Z \in [\omega] ^ \omega $,  $F_Z$-compactness is trivially equivalent to $F$-compactness,
for the   Fr\'echet filter $F$ over $ \omega$. Then notice that
$F$-compactness
 is equivalent to the statement that
every sequence converges.

In order to prove (2) $\Rightarrow $  (3) 
for the improved bound ${\m s}$ in (2), notice that if (2) holds, 
then $X$ is ultraconnected,  by Lemma \ref{ultra}.
Since $X$ is trivially sequentially compact, we get that in $X$ 
every sequence converges, by
Lemma \ref{ultrac2}.

(3) $\Rightarrow $  (4)  By (3), every $X \in \mathcal T$ is  $F$-compact,
for the Fr\'echet filter $F$ over $ \omega$, and this implies (4), since
$F$-compactness is preserved under taking products
 \end{proof}

Notice that
the example $\mathcal T= \{ \mathbf 2  \} $
shows that 
the value $\m s$ 
in Corollary \ref{seqcpseq}(2) cannot be improved,
since $\mathbf 2  ^ \lambda $ is sequentially compact, 
for every $\lambda < \m s$ \cite[Theorem 6.1]{vd}.
We do not know whether 
sequential compactness is equivalent to 
sequencewise $\mathcal P$-\brfrt compactness, for some 
$\mathcal P$ with $|\mathcal P| < 2 ^ \omega $. 
Of course, if we could have $ |\mathcal P| = \m s$, then  the implication 
(2) $\Rightarrow $  (3) in Corollary \ref{seqcpseq}
would be a direct consequence of Theorem \ref{prodd}.
On the other hand, the equivalence of (1) and (2) in Theorem \ref{prodd}  
 shows that
 sequential compactness is not equivalent to 
sequencewise $\mathcal P$-\brfrt compactness, for some 
$\mathcal P$ with $|\mathcal P| < \m s $, since, again, 
 $\mathcal K= \{ \mathbf 2  \} $ would give a counterexample.

\section{Concluding remarks} \label{concl}

The problem of deciding whether, in some class $\mathcal K$, 
sequencewise $\mathcal P$-\brfrt compactness  and sequencewise  $\mathcal P'$-compactness  are equivalent,
for certain families $\mathcal P$ and $\mathcal P'$, 
 might involve very difficult
problems, sometimes of purely set-theoretical nature
(even just when $\mathcal K$ is the class of all topological spaces).
Other particularly interesting cases are given by the class of topological groups,
and the class of normal spaces.

For a class $\mathcal K$ of topological spaces, and for $F$, $G$ filters
(not necessarily over the same set),
define the following (pre-)order: $F \leq _{C, \mathcal K} G$ if and only if
every $G$-compact topological space in $\mathcal K$ is 
$F$-compact. Strictly speaking, $\leq _{C, \mathcal K}$ 
is not an order relation, but it induces an order on the equivalence classes
modulo the relation $\equiv _{C, \mathcal K} $ defined by $F \equiv _{C, \mathcal K} G$ if and only if both 
$F \leq _{C, \mathcal K} G$ and $G \leq _{C, \mathcal K} F$.
When $\mathcal K$ is the class of all 
 Tychonoff spaces, and $F$ and $G$ are ultrafilters,
$  \leq _{C, \mathcal K}$ is the \emph{Comfort (pre-)order}. 
See \cite{filo} for a survey, in particular
Section 3.

Trivially, if sequencewise $\mathcal P$-\brfrt compactness is equivalent to 
$F$-\brfrt compactness in
$\mathcal K$,
then    $F \leq _{C, \mathcal K} G$,
for every $G \in \mathcal P$.
Hence, 
by Theorem \ref{prodd}(1) $\Leftrightarrow $  (3), 
if $\mathcal P$ is a class of filters 
with no minimum
 with respect to  $\leq _{C, \mathcal K} $ (here, ``minimum''  is intended modulo equivalence), 
then  
sequencewise $\mathcal P$-\brfrt compactness is not
preserved under taking products of spaces in $\mathcal K$.

However, the existence of such a minimum in $\mathcal P$ is not 
a sufficient condition for preservation under taking products,
as already the example of sequential compactness shows
(indeed, $F_Z \equiv _{C, \mathcal K} F _{Z'} $,
 for every $Z, Z' \in [\omega] ^{ \omega } $ and every $\mathcal K$). 
Using a result
by Garc{\'{\i}}a-Ferreira \cite{GFczech}, we can give an example in which all members of $\mathcal P$  are ultrafilters. For every non principal
ultrafilter $D$ over $ \omega$, \cite[Example 2.3]{GFczech} constructs a space which is not $D$-compact, but which is
sequencewise $\mathcal P$-\brfrt compact, where
$\mathcal P$ is the family of all  ultrafilters over $ \omega$ 
which are 
Rudin-Keisler equivalent to $D$ 
($\mathcal P$ is called $T _{RK}(D) $ in \cite{GFczech}).  
A fortiori, all elements of $\mathcal P$ are Comfort
equivalent, hence each of them is a minimum in $\mathcal P$, under equivalence.
However, sequencewise $\mathcal P$-\brfrt compactness is not preserved
under products, since, otherwise, by Theorem \ref{prodd},
it
would be equivalent to $D'$-compactness, for some $D' \in\mathcal P$.  
But $D'$-compactness is equivalent to $D$-compactness,
hence $\mathcal P$-\brfrt compactness 
would be equivalent to $D$-compactness,
and this is false, as shown by the space constructed by 
Garc{\'{\i}}a-Ferreira.

Of course, in any single particular application of 
Theorem \ref{prodd}, the needed results can be proved directly.
Nevertheless, we believe that the theory presented here 
has some intrinsic interest.
At the very least, it
has the advantage
of presenting many distinct results in an unified way,
 providing a conceptual clarification.

Apart from this,
 the notion of sequencewise $\mathcal P$-\brfrt compactness
is of some use in at least two respects.
First, we have showed that, when studying 
the satisfiability
 of 
a topological property
in products, it is convenient  to translate this property
(if possible) in terms of sequencewise $\mathcal P$-\brfrt compactness.
This provides a standard method for dealing with the problem,
and is what 
Bernstein 
\cite{Be},
Ginsburg and Saks \cite{GS}, Saks \cite{Sa} and
Caicedo \cite{Ca}, among others, have done in particular cases, 
as we mentioned in the introduction.
We have continued this line of research
 in \cite{ufmeng} 
for  
the Menger and  the Rothberger properties,
 here for sequential compactness,
and in \cite{prodlm} for $[\mu, \lambda ]$-compactness.
See also \cite{genfrol}.

 Second, the theory presented here naturally leads to new problems.
For example, it stresses
the importance of studying when
 sequencewise $\mathcal P$-\brfrt compactness and
sequencewise $\mathcal P'$-\brfrt compactness are equivalent,
for different $\mathcal P$ and $\mathcal P'$.
This should be particularly interesting
when restricted to special classes $\mathcal K$ of spaces.  
Just to present a simply stated  but intriguing case, 
really little is known
about Comfort order restricted to the class $\mathcal K$ of normal spaces,
that is  $ \leq _{C, \mathcal K} $.
Notice that this is just
a particular case in which both $\mathcal P$ and
$\mathcal P'$ are singletons.

Another kind of problems arise as follows.
So far, we have considered certain given topological properties,
and showed that there exists an appropriate family
$\mathcal P$ which characterizes them
in terms of sequencewise $\mathcal P$-\brfrt compactness.
One can also try to 
follow
the other direction, that is, take some interesting
classes of ultrafilters, and 
consider the associated topological properties.

We can also introduce  a ``pseudocompact-like''  version of
sequencewise $\mathcal P$-\brfrt compactness.
If $X$ is a topological space, 
$I$ is a set, $(Y_i) _{i \in I} $ 
is an $I$-indexed sequence  of subsets of  $X$,
and $F$  is a filter over $I$, 
 a point $x \in X$ is an
 \emph{$F$-limit point} 
(Choquet \cite{Ch})
 of the sequence
$(Y_i) _{i \in I} $
 if 
 $\{ i \in I \mid Y_i \cap U  \not= \emptyset   \} \in F$,
for every open neighborhood $U$ of $x$.
If $\mathcal P$ is a family of filters over $I$,
we say that $X$ is \emph{sequencewise $\mathcal P$-\brfrt pseudocompact}
 if, for every $I$-indexed sequence of nonempty open subsets of $X$,
there is $F \in \mathcal P$  such that the sequence 
has an $F$-limit point. Examples include 
pseudocompactness and $D$-pseudocompactness 
\cite{GS}; see \cite{tapp2,tproc2} and \cite[Sections 4 and 5]{cmuc}
for further examples. 
Sometimes the above examples are presented in
equivalent formulations; however, they can 
be recast in terms of sequencewise 
$\mathcal P$-\brfrt pseudocompactness by a remark  analogous
to \cite[Remark 5.4]{cmuc}.
Though the study of the behavior of sequencewise $\mathcal P$-\brfrt pseudocompactness
with respect to products goes beyond the scope of the present note,
 let us notice that, in general, results about sequencewise $\mathcal P$-\brfrt compactness
do not necessarily generalize, as they stand. 
As a classical example, 
products of
countably compact spaces and of pseudocompact spaces
behave in a different way with respect to ultrafilter convergence
\cite[Theorem 2.6 and Example 4.4]{GS}. 
More elaborate examples (and a possible explanation for  the asymmetry) can be found in  
\cite[Section 5]{tproc2} and \cite[Section 4]{tapp2}.

As a final remark, let us mention that all the results of the present note 
can be easily generalized 
to the case of  $\kappa$-box products,
provided that we consider only 
$\kappa$-complete filters and ultrafilters.
The $\kappa$-box product $\Box ^{ \kappa }  _{j \in J}  X_j $
is defined on the set 
$\prod _{j \in J}  X_j $,
and its topology has 
$\{ \prod _{j \in J}  O_j \mid O_j \text{ is open in } X_j, \text{ and } 
|\{ j \in J \mid O_j \not= X_j\}| < \kappa  \}$
as a base.

\section*{Acknowledgements} 

We thank anonymous referees of 
\cite{cmuc}
 for many helpful comments which have been of great use
in clarifying matters related both to 
\cite{cmuc}
 and to the present work.

We thank our students from Tor Vergata University for stimulating questions.

\end{document}